\documentclass{elsarticle}
\usepackage{{amsmath, amsthm, amssymb, amsfonts}}
\usepackage{color}
\usepackage{verbatim}
\usepackage{lscape} 
\usepackage{caption}

\newtheorem{thm}{Theorem}
\newtheorem{lem}[thm]{Lemma} 
\newtheorem{prop}[thm]{Proposition} 
 
\newtheorem{cor}[thm]{Corollary} 

\newdefinition{defn}{Definition}
\newdefinition{exmp}{Example} 
\newdefinition{remark}{Remark}




\DeclareMathOperator{\ord}{ord}

\newcommand{\pglq}{\mathrm{PGL}_2(\mathbb{Q})}
\newcommand{\pgloq}{\mathrm{PGL}_2(\overline{\mathbb{Q}})}
\newcommand{\Bmax}{B_{\mathrm{max}}}

\begin{document}
\title{A group-invariant version of Lehmer's conjecture on heights}
\author{Jan-Willem M. van Ittersum}
\ead{j.w.m.vanittersum@uu.nl}
\address{Mathematisch Instituut, Universiteit Utrecht, Postbus 80.010, 3508 TA Utrecht, The~Netherlands}
\begin{keyword}
Lehmer's conjecture\sep Mahler Measure\sep Weil height\sep G-orbit height
\MSC[2010]11G50 \sep  
11R04\sep  
11R06\sep  
12D10. 
\end{keyword}

\date{\today}

\begin{abstract}
We state and prove a group-invariant version of Lehmer's conjecture on heights, generalizing papers by Zagier (1993) \cite{zag93} and Dresden (1998) \cite{dre98} which are special cases of this theorem. We also extend their three cases to a full classification of all finite cyclic groups satisfying the condition that the set of all orbits for which every non-zero element lies on the unit circle is finite and non-empty.
\end{abstract}

\maketitle

\section{A Lehmer-type problem for the Weil height}
The \textit{Mahler measure} of a non-zero polynomial $f\in\mathbb{Z}[x]$ given by
\begin{equation}\label{eq:f} f(x)=a_n \prod_{i=1}^n (x-\alpha_i) \end{equation}
is defined as
$$M(f) = |a_n| \prod_{i=1}^n \max(|\alpha_i|,1).$$
In 1933, Lehmer asked whether there exists a lower bound $D>1$ such that for all $f\in\mathbb{Z}[x]$ it holds that
$$M(f) = 1 \quad \text{or} \quad M(f)\geq D.$$
He showed that if such a $D$ exists, then $D \leq 1.1762808\ldots$, the largest real root of the polynomial $x^{10}+x^9-x^7-x^6-x^5-x^4-x^3+x+1$ \cite{leh33}. Nowadays, this is still the smallest known value of $M(f)>1$ for $f\in \mathbb{Z}[x]$. 

Mahler's measure is related to the Weil height of an algebraic number. Let $K$ be an algebraic number field and $v$ a place of $K$. We assume that this $v$-adic valuation is normalized in such a way that for all non-zero $\alpha \in K$ the product of $|\alpha|_v$ over all places $v$ is equal to $1$ and the product of $|\alpha|_v$ over all Archimedean $v$ is equal to the absolute value of $N_{K/\mathbb{Q}}(\alpha)$. Then, for $\alpha \in K^*$ the (logarithmic) Weil height $h$ is defined by 
$$h(\alpha) = \frac{1}{[K:\mathbb{Q}]}\sum_{v} \log^+|\alpha|_v,$$
where the sum is over all places $v$ of $K$. We used the notation $\log^+(z)$ to refer to $\log\max(z,1)$ for $z\in \mathbb{R}$. The Weil height is independent of $K$ and if the polynomial (\ref{eq:f}) is the minimal polynomial of $\alpha$ over $\mathbb{Q}$, then $h(\alpha)=\tfrac{1}{n} \log M(f)$. This Weil height can be extended to $\mathbb{P}^1(\overline{\mathbb{Q}})$. 
Namely, for $x=[x_1:x_2] \in \mathbb{P}^1(\overline{\mathbb{Q}})$, we define
$$h(x) = \frac{1}{[K:\mathbb{Q}]}\sum_v \log\max(|x_1|_v,|x_2|_v),$$
where  $K$ is chosen such that $x_1, x_2\in K$. Note that $h(x)\geq 0$.

\begin{defn} Let $G$ be a finite subgroup of $\pglq$. The \emph{$G$-orbit height of $x \in \mathbb{P}^1(\overline{\mathbb{Q}})$} is defined as
$$h_G(x) = \sum_{\sigma \in G} h(\sigma x).$$
\end{defn}
Note that $h_G(x)\geq 0$ and $h_G(\sigma \alpha)=h_G(\alpha)$ for all $\sigma \in G$. We can now state the \emph{$G$-invariant Lehmer problem}, namely: given a finite group $G$ does there exist a positive lower bound $D$ such that
\begin{equation}\label{eq:problem} h_G(x) = 0 \quad \text{or} \quad h_G(x)\geq D \end{equation}
for all $x \in \mathbb{P}^1(\mathbb{\overline{Q}})$? As Zagier pointed out \cite{zag93}, if $G$ is trivial such a constant does not exist (e.g., $h_{\{e\}}(\sqrt[n]{2})=n^{-1}\log2 \to 0$). Assuming a mild restriction on $G$, which we will state next, we will prove that this lower bound $D$ exists for $h_G$. 

Recall that as a consequence of Kronecker's lemma \cite{kro}, for $\alpha\in K$ we have that $h(\alpha)=0$ if and only if $\alpha=0$ or $\alpha$ is a root of unity. We will now define a set $\mathcal{O}$ of orbits such that elements of these orbits are precisely the zeros of $h_G$ over $K$.

\begin{defn} Let $\mathcal{Q}$ be the set of all orbits of the action of $G$ on $\hat{\mathbb{C}}=\mathbb{C}\cup \{\infty\}$.
Let $\mathcal{O} \subset \mathcal{Q}$ be the set of all orbits for which every non-zero element lies on the unit circle, i.e.
$$\mathcal{O}=\{O\in\mathcal{Q} \mid \forall z\in O: z=0 \text{ or } |z|=1\}.$$
\end{defn}

The main purpose of this note is to solve the $G$-invariant Lehmer problem in the case that $h_G$ has finitely many zeros:

\begin{thm}\label{thm:1}
If $\mathcal{O}$ is finite, then there exists a positive $D$ such that
$$h_G(\alpha)=0 \quad \text{or} \quad h_G(\alpha)\geq D$$ 
for all $\alpha \in K$.
\end{thm}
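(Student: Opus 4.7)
The plan is to argue by contradiction, combining Northcott's theorem with Bilu's equidistribution theorem. Suppose no such $D$ exists; then there is a sequence $(\alpha_n)$ of algebraic numbers with $h_G(\alpha_n) > 0$ for every $n$ yet $h_G(\alpha_n) \to 0$. Since $h_G(\alpha) \geq h(\sigma\alpha)$ for every $\sigma \in G$, this forces $h(\sigma\alpha_n) \to 0$ for each $\sigma \in G$.

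First I would reduce to the unbounded-degree regime. If $[\mathbb{Q}(\alpha_n):\mathbb{Q}]$ is bounded along a subsequence, Northcott's theorem confines the $\alpha_n$ to a finite set; extracting a constant subsequence $\alpha_n = \alpha_\star$ would yield $h_G(\alpha_\star) = 0$ while $h_G(\alpha_\star) > 0$, a contradiction. So after passing to a subsequence I may assume $d_n := [\mathbb{Q}(\alpha_n):\mathbb{Q}] \to \infty$, the $\alpha_n$ are pairwise non-Galois-conjugate, and $\sigma\alpha_n \notin \{0,\infty\}$ for every $\sigma \in G$ and every $n$.

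Next I would invoke Bilu's equidistribution theorem: the normalized counting measures $\mu_n = \tfrac{1}{d_n}\sum_{\alpha' \in \Gal\cdot\alpha_n} \delta_{\alpha'}$ on the Galois conjugates of $\alpha_n$ converge weak-$*$ to the normalized Haar measure $\mu$ on the unit circle $S^1$. The same theorem applied to $(\sigma\alpha_n)$ gives weak-$*$ convergence of the conjugate measure of $\sigma\alpha_n$ to $\mu$; because $\sigma \in \pglq$ has rational coefficients it commutes with the Galois action, so that conjugate measure is exactly $\sigma_\ast\mu_n$. By continuity of pushforward and uniqueness of weak-$*$ limits, $\sigma_\ast\mu = \mu$ for every $\sigma \in G$; in particular each $\sigma$ preserves $S^1$ set-wise. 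But then every $z \in S^1$ has $Gz \subset S^1$, hence $Gz \in \mathcal{O}$, and since $S^1$ is uncountable while $|G|$ is finite, $\mathcal{O}$ would be uncountable, contradicting the hypothesis that $\mathcal{O}$ is finite.

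The main obstacle I anticipate is the careful application of Bilu's theorem uniformly in $\sigma \in G$, in particular the degenerate case in which $\sigma\alpha_n$ is a root of unity for infinitely many $n$. There Bilu still gives equidistribution on $S^1$ (equivalently, via Weyl's equidistribution of primitive roots of unity of large order), so the conclusion $\sigma_\ast\mu = \mu$ remains valid. The simultaneous extractions of subsequences (enforcing $d_n \to \infty$, pairwise non-conjugacy, and $\sigma\alpha_n \notin \{0,\infty\}$) are straightforward because $G$ is finite.
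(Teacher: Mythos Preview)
Your argument is correct and takes a genuinely different route from the paper. The paper proceeds constructively via a local inequality in the style of Zagier: it fixes minimal polynomials $p_i$ for representatives of the orbits in $\mathcal{O}$ and shows, for a suitable small $B>0$, that at every place $v$ of $K$ one has
\[
\sum_{\sigma\in G}\Bigl(\log^+|\sigma\alpha|_v-\tfrac12\log|\sigma\alpha|_v-B\sum_i\log|p_i(\sigma\alpha)|_v\Bigr)\ \geq\ n_v D;
\]
summing over $v$ and invoking the product formula annihilates the two auxiliary terms and leaves $h_G(\alpha)\geq D$. The Archimedean case of this lemma is handled by a maximum-principle argument on $\mathbb{C}$: the left side is harmonic off the circles $|\sigma z|=1$, so its minimum lies on the unit circle, and the finiteness of $\mathcal{O}$ guarantees that at every such point some term $\log^+|\tau z|-\tfrac12\log|\tau z|$ is strictly positive. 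Your Bilu-based contradiction is shorter and conceptually transparent, but it is inherently ineffective: it yields no information about $D$, whereas the paper's method, specialised to particular $G$, produces the sharp constants tabulated later. One small technicality worth making explicit in your write-up is that the weak-$*$ convergence should be taken on the compact space $\hat{\mathbb{C}}$, so that pushforward by the M\"obius map $\sigma$ (which may have a pole on $S^1$) is automatically continuous; then $\sigma_*\mu=\mu$ forces $\sigma(S^1)=S^1$ because the supports coincide.
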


\begin{remark}
For a finite $G\leq \pglq$ this theorem can also be stated in terms of Mahler measures instead of heights. Let $\alpha$ be a given algebraic integer with minimal polynomial $f \in \mathbb{Z}[x]$ of degree $n$. Assume $\sigma \in G$ and write $\sigma =\left(\begin{smallmatrix}a & b\\ c& d\end{smallmatrix}\right)$. Let
$$f_\sigma(z) = C (cz+d)^n f(\sigma(z)),$$
where $C\in\mathbb{Q}$ is chosen in such a way that $f_\sigma$ is primitive, so that $f_\sigma$ is the minimal polynomial of $\sigma^{-1}\alpha$. Then, this theorem implies that there exists a constant $E>1$ such that
$$\prod_{\sigma \in G}M\left(f_\sigma\right) = 1 \quad \text{or} \quad \prod_{\sigma \in G}M\left(f_\sigma\right)\geq E^n$$
for all primitive irreducible polynomials $f\in \mathbb{Z}[x]$.
\end{remark}

Later we will see that if $\mathcal{O}$ is infinite, its subsets contain all roots of unity. Moreover, if $\mathcal{O}$ is finite, its subsets can only contain $0$ and the roots of cyclotomic polynomials of degree at most 2. For the cyclic case $G=\langle \sigma \rangle$, we will use this to classify all $\sigma \in \pglq$ for which $\mathcal{O}$ is finite and non-empty. In nearly all cases, it is also possible to calculate the maximal value of $D$ for which Theorem~\ref{thm:1} holds. 
For three of these cases, these values are already known. By a theorem of Zhang, for which Zagier gave an elementary proof, we have that $D = \tfrac{1}{2}\log\frac{1+\sqrt{5}}{2} = 0.2406059\ldots$ for $G = \{z,1-z\}$ \cite{zha92, zag93}. Here we identified $\pglq$ with the M\"obius transformations, maps $\sigma:\hat{\mathbb{C}}\to \hat{\mathbb{C}}$ of the form $\sigma(z)=\frac{az+b}{cz+d}$ for $a,b,c,d\in\mathbb{Q}$. Dresden proved that $D = \log|\beta|= 0.4217993\ldots$ for $G = \{z,\frac{1}{1-z},1-\frac{1}{z}\}$, where $\beta$ is a maximal root in absolute value of $\left(z^2-z+1\right)^3-\left(z^2-z\right)^2$ and mentioned that for $G = \{z,\frac{z+1}{-z+1},-\frac{1}{z},\frac{z-1}{z+1}\}$ one has $D = \log|\gamma| = 0.7328576\ldots$ for $\gamma$ a maximal root in absolute value of $\left(z^2+1\right)^4+z^2\left(z^2-1\right)^2 $ \cite{dre98}.

\section{Proof of Theorem~\ref{thm:1}}
Set $\mathcal{O}=\{O_i\mid i\in\{1,2,\ldots, k\}\}$. For each orbit $O_i \in \mathcal{O}$ we choose $\alpha_i\in O_i$ and define $p_i\in\mathbb{Z}[x]$ as the minimal polynomial of $\alpha_i$. Let $N$ be the maximum of the degrees of all the $p_i$ and let
$$n_v= \begin{cases}
			0 & \text{if } v \text{ is non-Archimedean,}\\
			1 & \text{if } v \text{ is real,}\\
			2 & \text{if } v \text{ is complex.}
		\end{cases}$$
The proof of Theorem~\ref{thm:1} will follow directly from the following lemma:	
\begin{lem}\label{lem:1}
Let $v$ be a place of $K$ and $\alpha \in K$. There exists $\Bmax>0$ such that for all $B$ with $0<B<\Bmax$ there exists a positive $D$ such that
\begin{align}\label{eq:lem1}\sum_{\sigma \in G}\left(\log^+|\sigma(\alpha)|_v-\tfrac{1}{2}\log|\sigma(\alpha)|_v-B\sum_{i=1}^k \log|p_i(\sigma(\alpha))|_v \right)\geq  n_v D.
\end{align}
\end{lem}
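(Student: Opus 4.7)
My plan is to handle the non-Archimedean and Archimedean cases separately. For a non-Archimedean place $v$ (where $n_v = 0$), I would show each $\sigma$-summand is already nonnegative, so the sum is $\ge 0 = n_v D$. Setting $\beta = \sigma\alpha$: every root $\alpha_{i,j}$ of $p_i$ is $0$ or a root of unity, so $|\alpha_{i,j}|_v \le 1$. If $|\beta|_v > 1$, the ultrametric inequality gives $|\beta - \alpha_{i,j}|_v = |\beta|_v$, hence $\log|p_i(\beta)|_v = (\deg p_i)\log|\beta|_v$, and the summand equals $(\tfrac{1}{2} - BN)\log|\beta|_v \ge 0$ for $B \le 1/(2N)$, where $N := \sum_i \deg p_i$. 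If $|\beta|_v \le 1$, then $|p_i(\beta)|_v \le 1$ and $-\tfrac{1}{2}\log|\beta|_v \ge 0$, so the summand is again $\ge 0$.

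For an Archimedean place $v$, using the normalization $|\cdot|_v = |\cdot|^{n_v}$ for the usual complex absolute value and dividing through by $n_v$, the inequality reduces to producing a positive lower bound for
\[
F(\alpha) := \sum_{\sigma \in G}\Bigl(\tfrac{1}{2}\bigl|\log|\sigma\alpha|\bigr| - B\log|P(\sigma\alpha)|\Bigr), \qquad P := \prod_{i=1}^k p_i,
\]
valid for all $\alpha \in \mathbb{C}$. I would extend $F$ to a lower-semicontinuous function on the compact Riemann sphere $\hat{\mathbb{C}}$; it equals $+\infty$ on the finite \emph{bad set} $\mathrm{Bad} := G^{-1}(\{0,\infty\}\cup P^{-1}(0))$ as soon as $B < 1/(2N)$ (which is needed to control $F$ at the poles of elements of $G$). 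Writing $F = F_0 + F_1$ with $F_0(\alpha) := \tfrac{1}{2}\sum_\sigma|\log|\sigma\alpha||$, the zero set $Z := \{\alpha \in \hat{\mathbb{C}} : |\sigma\alpha| = 1 \text{ for all }\sigma \in G\}$ of $F_0$ is contained in $\mathrm{Bad}$, because any $\alpha$ lying in an orbit $O \in \mathcal{O}$ with $0 \notin O$ is mapped by some $\sigma \in G$ to the chosen representative $\alpha_i \in O$, which is a root of $p_i$. Hence $F_0 > 0$ on $\hat{\mathbb{C}}\setminus\mathrm{Bad}$.

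To show $F > 0$ on $\hat{\mathbb{C}} \setminus \mathrm{Bad}$, I would argue in two complementary regions. A local expansion near each $\beta \in \mathrm{Bad}$ yields $F(\alpha) = -c_\beta\log|\alpha - \beta| + d_\beta + O(|\alpha-\beta|)$ with $c_\beta > 0$, so $F(\alpha) > 0$ on the punctured ball $|\alpha - \beta| < e^{d_\beta/c_\beta}$. The crucial point is that this radius can be chosen \emph{independently of $B$}: when $\beta \in Z$, all $|\sigma\beta|$ equal $1$, so each $\tfrac{1}{2}|\log|\sigma\alpha||$ is $O(|\alpha-\beta|)$ and every surviving contribution to $c_\beta$ and $d_\beta$ carries a common factor of $B$, making $d_\beta/c_\beta$ $B$-independent; when $\beta \in G^{-1}(\{0,\infty\})$, the coefficient $c_\beta$ has a $B$-independent positive singular part $\tfrac{1}{2}$ (or $\tfrac{1}{2} - BN$ at a pole of some $\sigma$) coming from $|\log|\sigma\alpha||$ itself. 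On the compact complement $K$ of the union of these fixed-radius balls, $F_0$ attains a positive minimum $c_0$ and $|F - F_0| \le BC_1$ for a constant $C_1$ depending only on $K$, so $F \ge c_0 - BC_1 > 0$ as soon as $B < c_0/C_1$. Taking $\Bmax := \min(1/(2N), c_0/C_1)$, the infimum $D := \inf F$ is positive and is attained by lower semicontinuity on the compact sphere.

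The \emph{main obstacle} is the $B$-independence of the good neighborhoods of $\mathrm{Bad}$: if their radii shrank with $B$, the minimum $c_0$ on the complement would degenerate as $B \to 0$ and the two estimates could not be balanced. The case analysis above—noting that at bad points in $Z$ every contribution to $F$ scales in $B$, while at bad points above $\{0,\infty\}$ the factor $|\log|\sigma\alpha||$ itself supplies a $B$-free positive logarithmic singularity—is what resolves this.
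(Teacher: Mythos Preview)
Your non-Archimedean argument is essentially the paper's; the only difference is that you factor $p_i$ over its roots while the paper uses the ultrametric triangle inequality on the coefficients, and you take $N=\sum_i\deg p_i$ where the paper uses $k\cdot\max_i\deg p_i$. These give equivalent bounds.

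For the Archimedean case your route genuinely diverges from the paper's. The paper exploits that $f=g_1+Bg_2$ is \emph{harmonic} on each region $\{|\sigma z|\neq 1\ \forall\sigma\}$; the minimum principle then forces the infimum onto the unit circle, and one restricts further to the compact set $S=\{|z|=1,\ g_2(z)\le 0\}$, which avoids all bad points automatically. On $S$ one shows $g_1>0$ (because any $q\in S$ has orbit $\notin\mathcal{O}$), takes $m=\min_S g_1>0$, $n=\min_S g_2$, and sets $\Bmax<-m/n$. This sidesteps any local analysis near the bad set entirely. Your approach, by contrast, is a direct compactness argument on $\hat{\mathbb{C}}$: excise $B$-independent punctured balls around $\mathrm{Bad}$ on which $F>0$, and then balance $F_0\ge c_0>0$ against $B C_1$ on the compact remainder. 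This is perfectly viable and avoids the harmonicity observation, at the cost of the local expansions.

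Two points deserve tightening. First, your dichotomy ``$\beta\in Z$'' versus ``$\beta\in G^{-1}(\{0,\infty\})$'' need not exhaust $\mathrm{Bad}$: a root $\rho$ of some $p_i$ is a Galois conjugate of $\alpha_i$, and a preimage $\beta\in G^{-1}(\rho)$ can lie outside both $Z$ and $G^{-1}(\{0,\infty\})$. This case is easy --- since $G=-\sum_\sigma\log|P(\sigma\cdot)|$ blows up to $+\infty$ at $\beta$, pick $r_\beta$ ($B$-independent) so that $G>0$ on the punctured ball; then $F=F_0+BG\ge F_0>0$ there because $\beta\notin Z$ --- but it should be said. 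Second, for $\beta\in Z$ the claim that the good radius is $e^{d_\beta/c_\beta}$ ignores the $O(|\alpha-\beta|)$ remainder, whose $F_0$-part is \emph{not} scaled by $B$. The clean fix is to use $F_0\ge 0$: then $F\ge BG$, and $G$ itself (being $B$-independent) is positive on a $B$-independent punctured neighbourhood of $\beta$. With these two adjustments your argument goes through; the paper's harmonicity trick simply gets to the same conclusion with less bookkeeping.
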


\begin{proof}
Firstly, assume $v$ is finite. We will show that the summand 
\begin{align}\label{eq:summand}
\log^+|\sigma(\alpha)|_v-\tfrac{1}{2}\log|\sigma(\alpha)|_v-B\sum_{i=1}^k \log|p_i(\sigma(\alpha))|_v
\end{align} of (\ref{eq:lem1}) is nonnegative for all $\sigma \in G$. If $\sigma(\alpha)$ is integral at $v$, it follows that $\log^+|\sigma(\alpha)|_v=0$ and $\tfrac{1}{2}\log|\sigma(\alpha)|_v \leq 0.$
By writing $p_j(\sigma(\alpha)) = b_{j n} \sigma(\alpha)^n + \ldots + b_{j 0}$ for $j\in\{1,2,\ldots, k\}$ and $b_{j i}\in\mathbb{Z}$ we obtain
$$|p_j(\sigma(\alpha))|_v \leq \max(|b_{jn}|_v\cdot {|\sigma(\alpha)|_v}^n, \ldots ,|b_{j0}|_v)\leq 1.$$
Therefore, $\sum_{i=1}^k \log|p_i(\sigma(\alpha))|_v \leq 0,$ which implies that the summand (\ref{eq:summand}) is nonnegative for all $B \in \mathbb{R}^+$.\\
If $|\sigma(\alpha)|_v > 1$, we find that $\log^+|\sigma(\alpha)|_v-\tfrac{1}{2}\log|\sigma(\alpha)|_v =\tfrac{1}{2}\log|\sigma(\alpha)|_v >0.$
 Using the same notation as above,
$$\log|p_j(\sigma(\alpha))|_v \leq \log\max(|b_n|_v\cdot {|\sigma(\alpha)|_v}^n, \ldots ,|b_0|_v)\leq N \log|\sigma(\alpha)|_v.$$
Therefore, for 
$$\Bmax\leq \frac{1}{2 kN} \leq \frac{\log|\sigma(\alpha)|_v}{2 \sum_{i=1}^k \log|p_i(\sigma(\alpha))|_v}$$
the summand (\ref{eq:summand}) is positive for all $B$ with $0<B<\Bmax$ and all $\sigma \in G$. 

Secondly, if $v$ is Archimedean, then $|\alpha|_v = |\iota(\alpha)|^{n_v}$ for some embedding $\iota$ of $K$ into $\mathbb{C}$. Let
$$g_1(z) = \sum_{\sigma \in G} \left(\log^+|\sigma(z)|-\tfrac{1}{2}\log|\sigma(z)|\right) \quad \text{and} \quad g_2(z) =- \sum_{\sigma \in G}\sum_{i=1}^k \log|p_i(\sigma(z))|.$$
The claim is that for $z\in\mathbb{C}$ the function
$$ f(z)=g_1(z)+B\cdot g_2(z)= \sum_{\sigma \in G} \left(\log^+|\sigma(z)|-\tfrac{1}{2}\log|\sigma(z)|-B\sum_{i=1}^k \log|p_i(\sigma(z))| \right)$$
is bounded below by some constant $D>0$. Clearly, $f$ tends to infinity as $\sigma(z)$ tends to zero or to one of the roots of the $p_i$. As $\sum_{i=1}^k \log|p_i(\sigma(z))| \leq k\log|C\sigma(z)^N|$ for some $C\in \mathbb{R}^+$ and $\sigma(z)$ sufficiently large, assuming $\Bmax < \frac{1}{2kN}$ we find that if $\sigma(z)$ tends to infinity then $f$ tends to infinity. As $f$ is continuous elsewhere and harmonic if $|\sigma(z)|\neq 1$ for all $\sigma \in G$, it attains a minimum on a circle $|\sigma(z)|=1$ for some $\sigma \in G$. As $f(z)=f(\sigma(z))$ for all $\sigma\in G$, we can assume that this minimum is attained on the unit circle. If this minimum is strictly positive, we are done. Otherwise, $g_2(z)\leq 0$ and this minimum is attained in the set $S=\{z\in \mathbb{C} \mid |z|=1 \text{ and } g_2(z)\leq 0\}$. Let $q\in S$ and let $Q$ be the orbit of $q$.  If $Q \in\mathcal{O}$, write $Q=O_i$. Then, there is a $\tau\in G$ such that $\tau(q)$ is a root of $p_i$. It follows that $g_2$ tends to infinity as $z$ tends to $q$, contradicting $q\in S$. Therefore, $Q\not\in \mathcal{O}$. Hence, there exists a $\tau \in G$ such that $\tau(q) \neq 0$ and $|\tau(q)|\neq 1$. This implies that $\log^+|\tau(q)|-\tfrac{1}{2}\log|\tau(q)|>0$. As for all $z\in \mathbb{C}$ we have that $\log^+|z|-\tfrac{1}{2}\log|z|\geq 0$, it follows that $g_1(q)>0$ for all $q\in S$. As $S$ is compact, $g_1$ attains a minimum $m>0$ in $S$. Also, $g_2$ attains a minimum $n$ in $S$. Letting $\Bmax<-m/n$, it follows that $f$ attains a positive minimum $D$ in $S$. 
\end{proof}

\begin{proof}[Proof of Theorem~\ref{thm:1}]
Observe that for $\beta\in K^*$ we have
\begin{align}\label{eq:height} \sum_v n_v = [K:\mathbb{Q}] \quad \text{and} \quad \sum_v \log|\beta|_v = 0.\end{align}
Then, for $\alpha$ for which there is no $\sigma \in G$ such that $\sigma(\alpha)$ is zero, infinite or a root of some $p_i$, we can sum the inequality (\ref{eq:lem1}) in Lemma~\ref{lem:1} over all places $v$ of $K$ and apply (\ref{eq:height}). After dividing by $[K:\mathbb{Q}]$ we find that $ h_G(\alpha)\geq D$ for all but finitely many $\alpha\in K$. Hence, it follows that for some possibly smaller value of $D$ and all $\alpha \in K$ we have $h_G(\alpha)=0$ or $h_G(\alpha)\geq D$. 
\end{proof}

\section{When is $\mathcal{O}$ finite?}
We will investigate how strong the condition is that $\mathcal{O}$ is finite. 

\begin{prop} The set $\mathcal{O}$ is finite if and only if there exists a root of unity $\zeta$ such that $h_G(\zeta)> 0$.  \end{prop}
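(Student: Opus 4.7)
The plan is to prove the biconditional by contrapositive in both directions, with Kronecker's lemma ($h(\alpha)=0$ iff $\alpha=0$ or $\alpha$ is a root of unity) as the central tool.

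Forward direction ($\mathcal{O}$ finite $\Rightarrow \exists\zeta$ with $h_G(\zeta)>0$), by contrapositive: if $h_G(\zeta)=0$ for every root of unity $\zeta$, then each summand $h(\sigma\zeta)=0$, so each $\sigma\zeta$ is $0$ or a root of unity and $|\sigma\zeta|\in\{0,1\}$. The orbit $G\cdot\zeta$ therefore lies in $\{0\}\cup S^1$, i.e.\ $G\cdot\zeta\in\mathcal{O}$. Since each orbit has at most $|G|$ elements and there are infinitely many roots of unity, this produces infinitely many distinct elements of $\mathcal{O}$.

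Reverse direction ($\exists\zeta$ with $h_G(\zeta)>0 \Rightarrow \mathcal{O}$ finite), by contrapositive: assume $\mathcal{O}$ is infinite and show $h_G(\zeta)=0$ for every root of unity $\zeta$. The geometric step is that $X=\bigcup_{O\in\mathcal{O}}O\subseteq\{0\}\cup S^1$ is $G$-invariant and infinite (each orbit is finite), so $X\cap S^1$ is infinite; then for any $\sigma\in G$, the image $\sigma(X\cap S^1)$ lies in $X\subseteq\{0\}\cup S^1$, so the generalized circle $\sigma(S^1)$ shares infinitely many points with $S^1$, and as two distinct generalized circles in $\hat{\mathbb{C}}$ meet in at most two points, $\sigma(S^1)=S^1$. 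The arithmetic step exploits Galois-equivariance: since $\sigma\in\pglq$ has rational coefficients, $\sigma$ commutes with $\Gal(\overline{\mathbb{Q}}/\mathbb{Q})$, so the Galois conjugates of $\sigma\zeta$ are $\sigma\zeta'$ for Galois conjugates $\zeta'$ of $\zeta$, each of which is a root of unity on $S^1=\sigma(S^1)$. Hence every Archimedean valuation of $\sigma\zeta$ equals $1$; combined with the non-Archimedean information coming from $\sigma$ acting on the global unit $\zeta$, Kronecker's lemma shows $\sigma\zeta$ is $0$ or a root of unity, so $h(\sigma\zeta)=0$ and summing over $\sigma\in G$ gives $h_G(\zeta)=0$.

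The main obstacle is the arithmetic step of the reverse direction. The Archimedean absolute values of $\sigma\zeta$ are controlled immediately by the geometric stabilization $\sigma(S^1)=S^1$, but establishing $|\sigma\zeta|_v\leq 1$ at the finite places requires separate input—either a careful choice of matrix representative for $\sigma$ in $\mathrm{GL}_2(\mathbb{Z})$, or use of the limited structure of finite subgroups of $\pglq$ preserving $S^1$ to reduce $\sigma$ essentially to compositions of $z\mapsto\pm z$ and $z\mapsto\pm 1/z$.
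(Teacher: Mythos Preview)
Your forward direction is correct and is simply the contrapositive of the paper's direct argument. For the reverse direction, however, you take a longer route than the paper. The paper argues directly: from $h_G(\zeta)>0$ one extracts $\tau\in G$ with $\tau\zeta$ neither $0$ nor a root of unity, asserts that $\tau$ therefore sends $S^1$ to a \emph{different} generalized circle, and then the two--circle intersection bound immediately gives only finitely many $z\in S^1$ with $\tau z\in S^1$, hence $\mathcal{O}$ is finite. This direct route never touches non-Archimedean places at all.

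Your acknowledged obstacle is a genuine gap, and neither suggested fix repairs it. Not every finite-order $\sigma\in\pglq$ admits a $\mathrm{GL}_2(\mathbb{Z})$ representative, and the structural classification you propose is essentially the Corollary that \emph{follows} the Proposition in the paper, so invoking it here would be circular. Concretely, take $\sigma=\left(\begin{smallmatrix}1&2\\-2&-1\end{smallmatrix}\right)$, which has order $2$ in $\pglq$; one checks directly that $\sigma(S^1)=S^1$, yet $\sigma(i)=(-4+3i)/5$ has minimal polynomial $5x^2+8x+5$, so $h(\sigma(i))=\tfrac{1}{2}\log 5>0$. Thus the implication ``$\sigma(S^1)=S^1$ and $\zeta$ a root of unity $\Rightarrow h(\sigma\zeta)=0$'' that your arithmetic step needs is actually false, and for $G=\langle\sigma\rangle$ one has $\mathcal{O}$ infinite while $h_G(i)>0$. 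Your contrapositive argument cannot be completed as written; observe also that the paper's own step ``$\tau\zeta$ not a root of unity $\Rightarrow\tau(S^1)\neq S^1$'' is precisely the contrapositive of the claim that fails here, so the same subtlety is present in both arguments.
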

\begin{proof} If $\mathcal{O}$ is finite, we can choose a root of unity $\zeta$ and $\tau \in G$ with $|\tau \zeta| \neq 0,1$. As $h(\sigma \zeta)\geq 0$ for all $\sigma \in G$ and $h(\tau\zeta)>0$, we find $h_G(\zeta)> 0$. 

Conversely, if there exists a root of unity $\zeta$ with $h_G(\zeta) > 0$, then there exists a $\tau \in G$ such that $\tau \zeta$ is not a root of unity. It is known that M\"obius transformations map real circles on $\hat{\mathbb{C}}$ to real circles on $\hat{\mathbb{C}}$ provided that we regard a line through $\infty$ as a circle. Hence, $\tau$ maps the unit circle to another circle. As two circles intersect in at most two different points, there are at most two roots of unity $\eta$ such that $\tau \eta$ is also a root of unity. Hence, $\mathcal{O}$ is finite. 
\end{proof}

\begin{cor}
Let $G\leq \pglq$ be finite. Then, $\mathcal{O}$ is infinite if and only if $G$ is a subgroup of
\begin{align}\label{eq:fingrps}\left\{I,
\begin{pmatrix}
0 & 1 \\
1 & 0
\end{pmatrix}, 
\begin{pmatrix}
a & b\\
-b& -a
\end{pmatrix} ,
\begin{pmatrix}
b & a\\
-a & -b
\end{pmatrix}\right\}\end{align} for some $a,b\in\mathbb{Q}$ with $a^2\neq b^2$. \end{cor}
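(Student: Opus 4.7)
My plan is to translate ``$\mathcal{O}$ infinite'' into the geometric condition that every $\sigma \in G$ preserves the unit circle $S^1 \subset \hat{\mathbb{C}}$, and then classify the finite subgroups of $\pglq$ whose elements all preserve $S^1$. The first step is the equivalence ``$\mathcal{O}$ infinite $\iff$ $\sigma(S^1)=S^1$ for every $\sigma \in G$''. The reverse implication is immediate: $S^1$ is then an infinite $G$-invariant subset of $\{0\}\cup S^1$, so its $G$-orbit partition lies inside $\mathcal{O}$. For the forward direction, suppose instead that some $\sigma\in G$ has $\sigma(S^1)\neq S^1$. Because M\"obius transformations send circles of $\hat{\mathbb{C}}$ to circles, $\sigma(S^1)$ is a circle distinct from $S^1$, so $\sigma(S^1)\cap S^1$ has at most two points. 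The set $\bigcup_{O\in\mathcal{O}}O$ is $G$-invariant and contained in $\{0\}\cup S^1$, so any $z\in S^1$ lying in it satisfies $\sigma(z)\in\sigma(S^1)\cap(\{0\}\cup S^1)$, a set of at most three points; hence $\bigcup\mathcal{O}$, and so $\mathcal{O}$, is finite.

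Next I would identify the $\sigma = \left(\begin{smallmatrix}a&b\\c&d\end{smallmatrix}\right)\in\pglq$ preserving $S^1$: substituting $z = e^{i\theta}$ into $|az+b|^2 = |cz+d|^2$ and matching the constant and $\cos\theta$ contributions yields $a^2+b^2 = c^2+d^2$ and $ab = cd$, equivalently $(a\pm b)^2 = (c\pm d)^2$. Two sign combinations force $\det = 0$; the remaining two give the \emph{type~I} family $\left(\begin{smallmatrix}a&b\\b&a\end{smallmatrix}\right)$ and the \emph{type~II} family $\left(\begin{smallmatrix}a&b\\-b&-a\end{smallmatrix}\right)$, each with $a^2\neq b^2$. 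A type~I matrix has rational eigenvalues $a\pm b$ and is $\mathbb{Q}$-conjugate to $z\mapsto\frac{a+b}{a-b}z$, which has finite order in $\pglq$ only when $\frac{a+b}{a-b}\in\mathbb{Q}$ is a root of unity, i.e.\ $\pm 1$; this forces the matrix to be $I$ (when $b=0$) or $J = \left(\begin{smallmatrix}0&1\\1&0\end{smallmatrix}\right)$ (when $a=0$). A type~II matrix is traceless, so $\sigma^2 = (a^2-b^2)I\equiv I$ in $\pglq$: always an involution.

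If $G$ contains two distinct type~II elements $\tau,\tau'$, then $\tau\tau'\in G$ is a finite-order type~I element, hence $I$ or $J$; the case $I$ would collapse $\tau=\tau'$, so $\tau\tau' = J$. A short matrix computation then forces $\tau' = \left(\begin{smallmatrix}b&a\\-a&-b\end{smallmatrix}\right)$ when $\tau = \left(\begin{smallmatrix}a&b\\-b&-a\end{smallmatrix}\right)$, and puts $J$ inside $G$. Assembling everything, $|G|\le 4$ and $G$ is contained in the displayed four-element set for appropriate $a,b$ (choosing any $a,b\in\mathbb{Q}$ with $a^2\neq b^2$ when $G$ contains no type~II element). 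The converse is the direct check that each of the four matrices in the displayed set preserves $S^1$, after which the first step gives $\mathcal{O}$ infinite for any such subgroup.

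I expect the main obstacle to be the first equivalence: the geometric ``two circles meet in at most two points'' argument must be applied to the $G$-invariant set $\bigcup_{O\in\mathcal{O}}O$ itself, rather than only to the set of roots of unity, because a rational M\"obius map preserving $S^1$ need not preserve the set of roots of unity. Once the reduction to $S^1$-preservation is in place, the remaining classifications are straightforward linear algebra and case analysis.
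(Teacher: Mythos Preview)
Your argument is correct. The overall architecture matches the paper's: reduce to the two matrix shapes $\left(\begin{smallmatrix}a&b\\b&a\end{smallmatrix}\right)$ and $\left(\begin{smallmatrix}a&b\\-b&-a\end{smallmatrix}\right)$, eliminate all but $I$ and $J$ from the first shape by the finite-order condition, and pair up the type~II involutions via $J$. Where you differ is in the entry point. The paper invokes the preceding Proposition to conclude that every $\sigma\in G$ sends $\{1,-1\}$ to itself, and reads off the two matrix shapes from those two constraints. You instead prove directly, by the ``two circles meet in $\le 2$ points'' argument applied to the $G$-invariant set $\bigcup_{O\in\mathcal O}O$, that $\mathcal O$ is infinite exactly when every $\sigma$ preserves $S^1$, and then extract the matrix shapes from $|az+b|^2=|cz+d|^2$ on $S^1$. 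Your route is self-contained (no appeal to the Proposition or to Kronecker), and it cleanly sidesteps the small bookkeeping the paper leaves implicit when asserting $\sigma(1)=\pm1$ (namely, why $\sigma(1)\notin\{0,\infty\}$). Conversely, the paper's two-point check at $z=\pm1$ is a shade quicker than expanding the full identity on $S^1$. After this first step the two arguments coincide: your observation that the product of two type~II elements is type~I, hence equal to $I$ or $J$, is exactly the paper's condition ``$ab'=ba'$ or $aa'=bb'$''.
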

\begin{proof} If $\mathcal{O}$ is infinite then $\sigma(1)=\pm 1$ and $\sigma(-1)=\mp 1$. Hence, an element $\sigma\in G$ is of the form
$$\begin{pmatrix} a & b\\ b & a\end{pmatrix} \quad \text{or} \quad \begin{pmatrix} a & b \\ -b & -a\end{pmatrix}$$
for some $a,b\in\mathbb{Q}$ with $a^2\neq b^2$. The first is of infinite order unless $a=0$ or $b=0$. The product of two elements
$$\begin{pmatrix}
a & b\\
-b& -a
\end{pmatrix} \quad \text{and} \quad 
\begin{pmatrix}
a' & b'\\
-b'& -a'
\end{pmatrix}$$
of $\pglq$ is of finite order if and only if $ab'=ba'$ or $aa'=bb'$. Hence, $G$ must be a subgroup of (\ref{eq:fingrps}). It can easily be checked that $\mathcal{O}$ is infinite for subgroups of (\ref{eq:fingrps}).
 \end{proof}

\section{When is the $G$-orbit height zero?}
Denote with $\pm\tfrac{1}{2}\pm'\tfrac{1}{2}i\sqrt{3}$ the four primitive third and sixth roots of unity, where the sign $\pm'$ can be chosen independently from the sign $\pm$. 
\begin{lem} Let $\mathcal{O}$ be finite and $G\leq \pglq$. Then $h_G(\alpha)=0$ if and only if $\alpha$ equals $0, \pm 1, \pm i$ or  $\pm\tfrac{1}{2}\pm'\tfrac{1}{2}i\sqrt{3}$. 
\end{lem}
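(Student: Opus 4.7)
The plan is to establish the substantive (``only if'') direction: any $\alpha$ with $h_G(\alpha)=0$ must lie in the listed nine-element set. The converse amounts to a case-by-case check for each value.

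Since $h\geq 0$, the equation $h_G(\alpha)=\sum_{\sigma\in G}h(\sigma\alpha)=0$ forces $h(\sigma\alpha)=0$ for every $\sigma\in G$. By Kronecker's theorem (recalled just before Definition~2), each $\sigma\alpha$ is $0$ or a root of unity. Choosing $\sigma=I$, either $\alpha=0$ (done) or $\alpha$ is a primitive $n$th root of unity for some $n\geq1$; the task is then to show $n\in\{1,2,3,4,6\}$.

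Suppose for contradiction that $\varphi(n)\geq 3$. The key point is that any $\sigma\in G\leq\pglq$ has rational entries and therefore commutes with every $\tau\in\Gal(\overline{\mathbb{Q}}/\mathbb{Q})$: $\sigma(\tau\alpha)=\tau(\sigma\alpha)$. If some $\sigma\alpha=0$, then $\sigma(\tau\alpha)=\tau(0)=0$ for every $\tau$, and since a M\"obius map has a unique zero this forces all Galois conjugates of $\alpha$ to coincide, giving $\alpha\in\mathbb{Q}$ and $\varphi(n)=1$, a contradiction. Hence each $\sigma\alpha$ is a nonzero root of unity, and so is each $\sigma(\tau\alpha)=\tau(\sigma\alpha)$. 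In particular every $\sigma\in G$ sends the $\varphi(n)\geq 3$ primitive $n$th roots of unity (all on the unit circle) to points on the unit circle. Because M\"obius transformations map circles (including lines through $\infty$) to circles and three points determine a circle, this forces $\sigma$ to preserve the unit circle for every $\sigma\in G$. But then the $G$-orbit of any point on the unit circle is contained in the unit circle and therefore lies in $\mathcal{O}$; since these orbits are finite and the unit circle has infinitely many points, $\mathcal{O}$ is infinite, contradicting the hypothesis (one could alternatively invoke the preceding corollary directly). Therefore $\varphi(n)\leq 2$, so $n\in\{1,2,3,4,6\}$ and $\alpha\in\{\pm 1,\pm i,\pm\tfrac{1}{2}\pm'\tfrac{\sqrt{3}}{2}i\}$.

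The single delicate step is the bifurcation in the Galois argument: one must first exclude $\sigma\alpha=0$ using uniqueness of M\"obius zeros before concluding that $\sigma$ carries the entire Galois orbit into the unit circle. Everything else is a clean combination of Kronecker's theorem with the circle-preservation property of M\"obius transformations already exploited in the proof of the preceding proposition.
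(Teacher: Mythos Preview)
Your argument for the ``only if'' direction is correct and is essentially the paper's proof run in contrapositive form: both use Kronecker, the commutation of rational M\"obius maps with $\Gal(\overline{\mathbb{Q}}/\mathbb{Q})$, and circle-preservation of M\"obius transformations; the paper argues directly that some $\tau\in G$ sends at most two roots of unity to roots of unity (hence $\deg\alpha\le 2$), while you suppose $\varphi(n)\ge 3$ and deduce that every $\sigma$ preserves the unit circle, forcing $\mathcal{O}$ infinite.

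One small correction to your aside: the converse is \emph{not} a routine case check---for a specific $G$ it is generally false that all nine listed values satisfy $h_G(\alpha)=0$ (for instance $h_{\{z,1-z\}}(-1)=h(-1)+h(2)=\log 2>0$). The paper, like you, proves only the ``only if'' direction; the lemma is really describing the set of \emph{possible} zeros of $h_G$ over all admissible $G$.
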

\begin{proof}  If $h_G(\alpha)=0$, we have for all $\sigma \in G$ that $\sigma\alpha$ equals $0$ or is a root of unity. Assuming $\alpha \in \mathbb{Q}$, we find $\alpha$ equals $0$ or $\pm 1$. If $\alpha \not \in \mathbb{Q}$, then for all $\sigma\in G$ we also have $\sigma \alpha \not \in \mathbb{Q}$, so $\sigma \alpha$ is a root of unity. 
By the proof of the previous proposition there is a $\tau\in G$ which maps at most two roots of unity to other roots of unity. 
As $h_G(\alpha)=h_G(\alpha')$ for all algebraic conjugates $\alpha'$ of $\alpha$, it follows that the minimal polynomial of $\alpha$ has degree at most two. Hence, $\alpha$ equals $\pm i$ or $\pm\tfrac{1}{2}\pm'\tfrac{1}{2}i\sqrt{3}$. 
\end{proof}

\section{Explicit constants}\label{sec:5}

Dresden proved that all finite subgroups of $\pglq$ are isomorphic to the cyclic group $C_n$ or the dihedral group $D_n$ (where the latter is of order $2n$) for $n=1,2,3,4$ or $6$ \cite{dre04}. By the previous lemma, there are only $9$ possible elements in orbits in $\mathcal{O}$. Hereby, it is possible to determine all finite cyclic groups $G\leq \pglq$ for which $\mathcal{O}$ is finite and non-empty. Moreover, by generalizing Zagier's and Dresden's proofs \cite{zag93,dre98}, it is possible to explicitly find the best value of $D$ in Theorem~\ref{thm:1}. We have collected these data in Table~\ref{tab:1}, meaning the following: the first column of this table gives a list of one generator $\sigma \in \pglq$ for every cyclic group $G=\langle \sigma \rangle$ satisfying the condition that $\mathcal{O}$ is finite and non-empty. It is assumed that $p,q\in\mathbb{Z}$ are relatively prime such that $q>0$, $\det(\sigma)\neq 0$, $p/q\neq 0$ in the first and fifth row and $p/q \neq \mp 1/2$ in the seventh row. The second column shows the elements of $\mathcal{O}$, where we use the shorthand notation $\omega_{\pm \pm'}=\pm\tfrac{1}{2}\pm'\tfrac{1}{2}i\sqrt{3}$. The third column shows the order of $\sigma$. The remaining columns give information needed to determine the optimal value of $D$. We let $\phi(z) = \frac{1}{E} \prod_{\sigma \in G}p_1(\sigma(z))$ where $p_1$ corresponds to an orbit in $\mathcal{O}$ as in the proof of Theorem~\ref{thm:1} and $E$ is such that the numerator and denominator of $\phi(z)$ are relatively prime. The inequality 
$\displaystyle{\sum_{i=0}^{\ord\sigma-1}} \log^+|\sigma^i(z)|-B \log|\phi(z)|\geq D$
holds where the values of $B$ and $\exp(D)$ can be found in the corresponding row of the columns `$B$' and `$\exp(D)$'. 
From this, in a similar fashion as the proof of Theorem~\ref{thm:1}, one deduces
$ h_G(\alpha)=0$ or $h_G(\alpha)\geq D$ for all $\alpha \in \overline{\mathbb{Q}}$. The element $\alpha$ in the last column is such that equality holds in $h_G(\alpha)\geq D$. However, it is not unique. Here, a maximal root of a polynomial is defined as a root which

\begin{landscape}
\captionof{table}{Classification of cyclic groups with $\mathcal{O}$ non-empty and finite, together with data to find the optimal constant $D$ of Theorem~\ref{thm:1}. In Section~\ref{sec:5} the meaning of these data is explained.\vspace{-20pt}} \label{tab:1}
$$
\begin{array}{l l l l l l l l} \hline \vspace{-10pt}\\
&\sigma & \text{Elts. of } \mathcal{O} & \ord \sigma & B & \exp(D) & \begin{array}{@{}l@{}} D\approx \\ \scriptstyle{(\text{for } p/q = 5)} \end{array} & \text{Equality}\\ \hline 

&\begin{pmatrix} 1 & 0  \\ p/q & -1 \end{pmatrix}
	& \{0 \}
	& 2 
	& 1 
	& \max(||p|-q|,|q|) 
	& 1.38629 
	&  \begin{cases} 
   		1        & \text{if } p/q>0 \\
   		-1       & \text{if } p/q<0
  		\end{cases}		
  \\ \vspace{5pt}
  
&\begin{pmatrix} 1 & \pm 1  \\ p/q & -1 \end{pmatrix} 
	& \{0 , \mp 1 \}
	& 2 
	& 1 
	& \max(|p\mp q|/2, |q|) 
	& 0.69315 
	& \pm 1 \quad \text{ if } 2\mid p\mp q
	\\ \vspace{5pt}
		
&\begin{pmatrix} 1 & p/q  \\ p/q\pm 2 & -1 \end{pmatrix} 
	& \{\pm 1\} 
	& 2 
	& 1 
	& \max(|p\pm 3q|/4,|p\mp q|/4) 
	& 0.69315 		
	&\mp 1 \quad \text{ if } 	4\mid p\mp q
	\\ \vspace{5pt}
		

&\begin{pmatrix} 1 & -1  \\ 3 & 1 \end{pmatrix} 
	& \{0 , -1 , 1 \}
	& 3 
	& 1 
	& 5 
	& 1.60944 
	& i
	\\ \hline \vspace{5pt}
	
&\begin{pmatrix} 1 & p/q  \\ p/q & -1 \end{pmatrix}  
	& \{i , -i\} & 2 
	& \tfrac12
	& ||p|+q|/2 
	& 1.09861
	& 1,-1 \quad \text{if } 2\mid p+q
	\\ \vspace{5pt}
		
\text{\cite{dre98}}&\begin{pmatrix} 1 & 1  \\ -1 & 1 \end{pmatrix} 
     & \{i\}, \{-i \} & 4 
     & 0.19408\ldots 
     & |\alpha| 
     & 0.73286 
     & \begin{array}{@{}c@{}} \alpha, \text{maximal root of} \\
  			(z^2+1)^4+z^2(z^2-1)^2 \end{array}  
  	\\ \hline \vspace{5pt}
     
&\begin{pmatrix} 1 & p/q  \\ p/q\pm 1 & -1 \end{pmatrix} 
	& \{ \omega_{\pm +} , \omega_{\pm -} \}
	& 2 
	& \tfrac12
	&\max(|p\pm 2q|/3,|p\mp q|/3) 
	& 0.84730 
	& \mp 1 \quad \text{if } 3\mid p\mp q 
	\\ \vspace{5pt}  
	 
\text{\cite{dre98}}&\begin{pmatrix} 0 & 1  \\ -1 & \pm 1 \end{pmatrix} 
	& \{ \omega_{\pm +}\},\{\omega_{\pm -} \}
	& 3 
	& 0.11724\ldots 
 	& |\alpha| & 0.42180 & \begin{array}{@{}c@{}} \alpha,     
 		\text{maximal root of} \\   (z^2\mp z+1)^3-(z^2\mp z)^2 \end{array} 
 	\\ \vspace{5pt}
 
&\begin{pmatrix} 2 & \mp1  \\ \pm1 & 1 \end{pmatrix} 
    & \{\omega_{\pm +} \},  \{\omega_{\pm -} \}
    & 6 
    & 0.30503\ldots 
    & \left|\frac{\alpha  (2 \alpha -1)}{\alpha +1}\right| & 1.75737 
    &   \begin{array}{@{}c@{}} \alpha, \text{maximal root of} \\
  		(z^2\mp z+1)^6+\\
  		z^2(2z^2\mp 5z+2)^2(z^2-1)^2
  		\end{array}
  \\ \hline   
\end{array}
$$

\end{landscape}
\captionof{table}{Cyclic groups $\langle \sigma \rangle$ with $\mathcal{O}$ non-empty and finite which appear twice in Table~\ref{tab:1}, together with data to find the optimal constant $D$ of Theorem~\ref{thm:1}. In Section~\ref{sec:5} the meaning of these data is explained.\vspace{-20pt}}\label{tab:2} 
$$
\begin{array}{l l l l l l l l l l l} \hline \vspace{-10pt}\\
&\sigma & \text{Elts. of } \mathcal{O} & B_1 & B_2 & \exp(D) & D\approx & \text{Equality}\\ \hline 

&\begin{pmatrix} 1 & 0  \\ \pm 1 & -1 \end{pmatrix}
     &  \{ 0 \}, \{\omega_{\pm +} , \omega_{\pm -}\}  
     & 1/2 & 1/2     
     &  \sqrt{\frac{1+\sqrt{5}}{2}}
     & 0.24061
     & \mp e^{\tfrac{2\pi i}{5}}\\  \vspace{5pt}
     
&\begin{pmatrix} 1 & 0  \\ \pm 2 & -1 \end{pmatrix} 
     &  \{0 \}, \{ \pm 1\}  
     & 2/3 & 1/3 
     &  \sqrt{3}
     & 0.54931
     & \omega_{\pm +}\\ \vspace{5pt}
     
\text{\cite{zag93, zha92}}&\begin{pmatrix} 1 & \mp 1  \\ 0 & -1 \end{pmatrix} 
     &  \{0 , \pm 1\}, \{\omega_{\pm +} , \omega_{\pm -}\}  
     & \frac{\sqrt{5}-1}{2\sqrt{5}}
     & \frac{1}{4\sqrt{5}}  
     & \sqrt{\frac{1+\sqrt{5}}{2}} 
     & 0.24061
     & \pm e^{\tfrac{\pi i}{5}}\\ \vspace{5pt}    
     
&\begin{pmatrix} 1 & \mp 1  \\ \mp 1 & -1 \end{pmatrix} 
     &  \{0 , \pm 1\}, \{i , -i\} 
     &  &   
     & 
     & 
     \\ \vspace{5pt} 
    
&\begin{pmatrix} 1 & \mp 1  \\ \mp 2 & -1 \end{pmatrix} 
     &  \{0 , \pm 1\}, \{\omega_{\mp +} , \omega_{\mp -}\}   &  &   &  
     &  
     \\ \vspace{5pt} 

&\begin{pmatrix} 1 & \mp 1  \\ \mp 3 & -1 \end{pmatrix} 
     &  \{0 , \pm 1\}, \{ \mp 1 \} 
     &  
     &  
     & 
     &  
     \\ \hline 
\end{array}
$$

is maximal in absolute value and which imaginary part is nonnegative. For the polynomials we apply this definition to, this uniquely determines the root.

\begin{exmp}
Consider the second row of Table~\ref{tab:1} and choose $\pm$ to be $+$, that is $\sigma(z) = \frac{z+1}{p/q \cdot z-1}$ and $p/q \neq - 1$. Then, $\mathcal{O} =\{\{0,-1\}\},\ \phi(z) = \frac{z(z+1)}{pz-q}$ and $D=\log(\max(|p- q|/2,|q|)$. Then
$$\log^+|z|+\log^+\left|\sigma(z)\right|-\log|\phi(z)|\geq D,$$
which yields $ h_G(0)=0$, $h_G(-1)=0$ and $h_G(\alpha)\geq D$ for all $\alpha \in \overline{\mathbb{Q}}$ with $\alpha \neq 0,-1$. We have $h_G(\alpha)= D$ for $\alpha=1$ if $2\mid p-q$. 
\end{exmp}
There are $\sigma \in \pglq$ which can be found twice in Table~\ref{tab:1}. For these $\sigma$ we have that the corresponding value of $D$ is not positive, so this value of $D$ is not allowed in Theorem~\ref{thm:1}. 
All these $\sigma$ can be found in Table~\ref{tab:2}. They all have order 2. Using another inequality, it is in some cases still possible to find the optimal (positive) value of $D$. Namely, it holds that
$\log^+|z|+\log^+|\sigma(z)|-B_1 \log|\phi_1(z)|-B_2 \log|\phi_2(z)|\geq D$ for corresponding values in the columns `$B_1$', `$B_2$' and `$\exp(D)$'. Here, $\phi_1$ and $\phi_2$ are similarly defined, that is, $\phi_i(z) = \frac{1}{E_i} \prod_{\sigma \in G}p_i(\sigma(z))$ where $p_i$ corresponds to the $i$th orbit in $\mathcal{O}$ as in the proof of Theorem~\ref{thm:1} and $E_i$ is such that the numerator and denominator of $\phi_i(z)$ are relatively prime. Again, it follows that
$ h_G(\alpha)=0$ or $h_G(\alpha)\geq D$ for all $\alpha \in \overline{\mathbb{Q}}$. In three of the cases, the author was not able to find the optimal value of $D$ with corresponding values of $B_1$ and $B_2$.

\section{Generalizations}
It is possible to extend Table~\ref{tab:1} and Table~\ref{tab:2} to non-cyclic subgroups of $\pglq$. For example consider
$$G= \left\langle\begin{pmatrix} 1 & -1\\ 3 & 1 \end{pmatrix}, \begin{pmatrix} 1 & 0 \\ 0 & -1\end{pmatrix}\right\rangle \simeq D_3,$$
the dihedral group with 6 elements. Then $\mathcal{O}=\{\{-1, 0, 1\}\}$ and one can show that $h_G(\alpha)\geq \log(25)$ for $\alpha \neq -1,0,1$ with equality for $\alpha=i$. Similarly, for
$$G=\left \langle\begin{pmatrix} 1 & -1 \\ 0 & -1\end{pmatrix}, \begin{pmatrix} 1 & 1\\ 2 & -1\end{pmatrix}\right \rangle \simeq D_2,$$
one has $\mathcal{O}=\{\{\omega_{+ +}, \omega_{+ -}\}\}$ and one can show that $h_G(\alpha)\geq \log(2)$ for $\alpha \neq \omega_{+ +}, \omega_{+ -} $ with equality for $\alpha=-1$.
It would be interesting to specify for which other subgroups one can find a similar statement. \\
Another way to extend these tables, is by considering finite $G\leq \pgloq$. For example, $\sigma(z)= \frac{z -\sqrt{3}}{\sqrt{3}z + 1}$ has order $3$ and one finds $\mathcal{O}=\{\{i\},\{-i\}\}$ for $G=\langle \sigma \rangle$. \\
Although the value of $D$ is computed in some cases, this note does not explain how $G$ determines the value of $D$. It would be interesting to find a universal lower bound on $D$ or to strengthen Theorem~\ref{thm:1} by proving that $D$ is greater than some invariant depending on $G$. 

\section{Acknowledgement}
I would like to thank my supervisor Gunther Cornelissen for helpful discussions and suggestions.


\def\cprime{$'$}
\providecommand{\bysame}{\leavevmode\hbox to3em{\hrulefill}\thinspace}
\providecommand{\MR}{\relax\ifhmode\unskip\space\fi MR }
\providecommand{\MRhref}[2]{%
  \href{http://www.ams.org/mathscinet-getitem?mr=#1}{#2}
}
\providecommand{\href}[2]{#2}

\end{document}